\title{Strong embeddability and extensions of groups}
\author{Ronghui Ji}
\address{
		Department of Mathematical Sciences\\
		Indiana University-Purdue University, Indianapolis\\ 
		402 N. Blackford Street\\
		Indianapolis, IN 46202\\
		}
\email{ronji@math.iupui.edu}
\author{Crichton Ogle}
\address{
		Department of Mathematics\\
		The Ohio State University\\
        Columbus, OH 43210\\
        }
\email{ogle@math.ohio-state.edu}
\author{Bobby W. Ramsey}
\address{
		Department of Mathematics\\
		The Ohio State University\\
        Columbus, OH 43210\\
        }
\email{ramsey.313@math.osu.edu}
\newtheorem*{thm*}{Theorem}
\newtheorem*{cor*}{Corollary}
\newtheorem{thm}{Theorem}[section]
\newtheorem{defn}[thm]{Definition}
\newtheorem{prop}[thm]{Proposition}
\newtheorem{cor}[thm]{Corollary}
\newtheorem{lem}[thm]{Lemma}
\newcommand{\N}{{\mathbb{N}}}
\newcommand{\isom}{{\,\cong\,}}
\newcommand{\mH}{\mathcal{H}}
\newcommand{\I}{\mathcal{I}}
\newcommand{\Y}{\mathcal{Y}}
\newcommand{\supp}{\operatorname{supp}}
\begin{document}

\begin{abstract}
We introduce the notion of strong embeddability for a metric space.  This property lies between coarse embeddability and property A.
A relative version of strong embeddability is developed in terms of a family of set maps on the metric space.  When restricted to
discrete groups, this yields relative coarse embeddability.  We verify that groups
acting on a metric space which is strongly embeddable has this relative strong embeddability, provided the stabilizer subgroups do.
As a corollary, strong embeddability is preserved under group extensions.
\end{abstract}   

\maketitle

\section{Introduction}
If a discrete group $G$ has property A, as defined by Yu in \cite{yu}, it may be coarsely embedded in a separable Hilbert space. 
This is a desirable property, as Yu has shown \cite{yu} that such coarse embeddability verifies the Coarse Baum-Connes Conjecture 
for that group. In \cite{DG_CPUE}, it was shown that if $H \to G \to Q$ is an extension of groups where $Q$ has property A and $H$ 
is coarsely embeddable, then $G$ is coarsely embeddable. Unlike property A, it is unknown whether the property of coarse embeddability 
is closed under arbitrary extensions.  Moreover, it is known for metric spaces that property A is a strictly stronger condition than 
coarse embeddability \cite{AGS}.

In this paper we introduce an intermediate notion of strong coarse embeddability implied by property A and implying coarse embeddability.
This is done via a careful analysis of the results of Dadarlat and Guentner in \cite{DG_CPUE}.
A main property of this stronger notion is that it is preserved under arbitrary extensions.  More generally, strong embeddability admits a 
natural relative formulation by which one can verify a relative generalization of the extension
theorem.  Namely:  If $H < G$ is strongly embeddable and $\{ \pi : G \to G/H \}$ is a \emph{strongly embeddable family of maps}
then $G$ is strongly embeddable.  This strong embeddability of a family of maps is a natural generalization of relative property A
studied in \cite{JOR4}.  As a consequence, if $G$ has relative property A with respect to a family $H_1, \ldots, H_n$ of subgroups,
and if each $H_i$ is coarsely embeddable, then $G$ is coarsely embeddable.
Finally, we show that strong embeddability is satisfied by groups acting on strongly embeddable metric spaces.

It is currently unknown whether strong embeddability and coarse embeddability coincide for discrete metric spaces.  We hope to address
this issue in future work.

\section{Preliminaries}
All groups are assumed to be countable and uniformly discrete, with a proper left-invariant metric.  All metric
spaces are assumed to be uniformly discrete with bounded geometry.

\begin{defn}
A \emph{coarse embedding} of a metric space $(X,d)$ in a Hilbert space $\mH$ is a map $\phi : X \to \mH$ for which
there exist two nondecreasing maps $\rho_{-}, \rho_{+} : [0, \infty) \to \R$ with $\lim_{r\to\infty} \rho_{\pm}(r) = \infty$
for which the following holds for all $x, y \in X$.
\[ \rho_{-}\left(  d(x,y) \right) \leq \| \phi(x) - \phi(y) \| \leq \rho_{+}\left( d(x,y) \right). \]
A metric space for which a coarse embedding exists is called \emph{coarsely embeddable}.
\end{defn}

A useful characterization of coarse embeddability was given by Dadarlat and Guentner in \cite{DG_CPUE}.
\begin{prop}\label{prop:CE}
Let $X$ be a metric space.  Then $X$ is coarsely embeddable if and only if for every $R, \epsilon > 0$ there
exists a Hilbert space valued map $\beta : X \to \mH$ with $\| \beta(x) \| = 1$ for each $x \in X$, and satisfying:
\begin{enumerate}
	\item $\sup \left\{ \left| 1 - \langle \beta(x), \beta(y) \rangle \right| : d(x,y) \leq R \right\} \leq \epsilon$.
	\item $\lim_{S \to \infty} \sup \left\{ \left| \langle \beta(x) , \beta(y) \rangle \right| : d(x,y) \geq S \right\} = 0$.
\end{enumerate}	
\end{prop}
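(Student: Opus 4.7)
My plan is to prove the two implications separately. For the forward direction, given a coarse embedding $\phi : X \to \mH_0$, I would invoke Schoenberg's theorem: for each $t > 0$ the kernel $k_t(x,y) = \exp(-t\|\phi(x)-\phi(y)\|^2)$ is positive definite on $X$, and a standard GNS / Gaussian Hilbert space construction yields a unit-vector map $\beta_t : X \to \mH_t$ with $\langle \beta_t(x), \beta_t(y)\rangle = k_t(x,y)$. For given $R,\epsilon > 0$ I would choose $t$ small enough that $1 - e^{-t\rho_+(R)^2} \leq \epsilon$; condition (1) then follows from the upper control $\|\phi(x)-\phi(y)\| \leq \rho_+(R)$ when $d(x,y) \leq R$, and condition (2) follows from the bound $|\langle \beta_t(x), \beta_t(y)\rangle| \leq e^{-t\rho_-(d(x,y))^2}$ together with the properness of $\rho_-$.

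For the reverse direction, I would pick sequences with $R_n \to \infty$ and $\sum_n \epsilon_n < \infty$ (for instance $R_n = n$ and $\epsilon_n = 2^{-n}$), and obtain unit-vector maps $\beta_n : X \to \mH_n$ satisfying (1) and (2) with parameters $(R_n,\epsilon_n)$. After fixing a basepoint $x_0 \in X$, I would set
\[ \phi(x) = \bigoplus_n \bigl(\beta_n(x) - \beta_n(x_0)\bigr) \in \bigoplus_n \mH_n. \]
Well-definedness follows from $\|\beta_n(x)-\beta_n(x_0)\|^2 \leq 2\bigl|1-\langle\beta_n(x),\beta_n(x_0)\rangle\bigr| \leq 2\epsilon_n$ for all $n$ with $R_n \geq d(x,x_0)$, giving a summable tail. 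The upper bound on $\|\phi(x) - \phi(y)\|$ comes from the same splitting: for $n$ with $R_n \geq d(x,y)$ the summand $\|\beta_n(x)-\beta_n(y)\|^2$ is at most $2\epsilon_n$, while the finitely many $n$ with $R_n < d(x,y)$ contribute at most $4$ each, yielding an estimate of the form $\|\phi(x)-\phi(y)\|^2 \leq C + 4d(x,y)$.

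The main obstacle is the lower bound $\rho_-(d(x,y)) \leq \|\phi(x) - \phi(y)\|$, which must be extracted from condition (2). For each $n$, (2) supplies some $S_n$ with $|\langle \beta_n(x), \beta_n(y)\rangle| \leq 1/2$ whenever $d(x,y) \geq S_n$, whence $\|\beta_n(x)-\beta_n(y)\|^2 \geq 1$ on that set. Consequently
\[ \|\phi(x)-\phi(y)\|^2 \geq \#\{ n : S_n \leq d(x,y) \}, \]
and since each $S_n$ is finite this counting function tends to $\infty$ with $d(x,y)$; I would then define $\rho_-$ as the square root of this count and pass to a nondecreasing lower envelope. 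The delicate point is coordinating the choice of the sequence $(R_n,\epsilon_n)$ so that $\phi$ enjoys both asymptotic controls simultaneously and the extracted $\rho_-$ genuinely escapes to $\infty$.
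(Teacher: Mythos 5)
Your proposal is correct: the forward direction via Schoenberg's theorem applied to the Gaussian kernels $\exp(-t\|\phi(x)-\phi(y)\|^2)$ and the reverse direction via the direct sum $\bigoplus_n (\beta_n(x)-\beta_n(x_0))$ over a sequence of scales, with the lower control $\rho_-$ extracted by counting the indices $n$ with $S_n \leq d(x,y)$, is exactly the standard argument, and the "delicate point" you flag resolves without difficulty since that counting function is automatically nondecreasing and unbounded. The paper itself gives no proof of this proposition, citing Dadarlat and Guentner, and your argument is essentially the one found in that reference.
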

We point out that condition (1) can be replaced by 
	$\sup \left\{ \| \beta(x) - \beta(y) \| : d(x,y) \leq R \right\} \leq \epsilon$. 

We will need to make use of a family of coarsely embeddable metric spaces, with some uniform control.  We take the following
as our definition.

\begin{defn}[{\cite[Prop. 2.3]{DG_UERHG}}]\label{defn:ece}
A family $(X_j)_{j \in \I}$ of metric spaces is \emph{equi-coarsely embeddable} if for every $R, \epsilon > 0$ there is a 
family of Hilbert space valued maps $\xi_j : X_j \to \mH$ with $\| \xi_j(x) \| = 1$ for all $x \in X_j$ and satisfying:
\begin{enumerate}
	\item For all $j \in \I$ and all $x,y \in X_j$, if $d(x,y) \leq R$, then $\| \xi_j(x) - \xi_j(y) \| \leq \epsilon$.
    \item $\lim_{S\to\infty} \sup_{j \in \I} 
	 	\sup\left\{ \left| \langle \xi_j(x), \xi_j(y)\rangle \right| : d(x,y) \geq S\, , \,\, x,y \in X_j \right\} = 0$.
\end{enumerate}
\end{defn}

Now to Yu's property A.
\begin{defn}
A metric space $(X,d)$ has property A if for every $R, \epsilon > 0$ there exists an $S > 0$
and a collection $\left(A_x\right)_x$ of finite nonempty subsets of $X \times \N$, indexed by $x \in X$,
such that:
\begin{enumerate}
	\item For each $x, y \in X$, if $d(x,y) < R$, then \[ \frac{|A_x \Delta A_y|}{|A_x|} < \epsilon. \]
	\item For each $x \in X$, if $(y, n) \in A_x$ then $d(x, y) < S$.
\end{enumerate}
\end{defn}

The following characterization will be useful in the sequel.
\begin{prop}[{\cite[Prop. 3.2]{tu_RmkPropA}}]\label{prop:PropA}
A metric space $(X,d)$ has property A if and only if for every 
$R, \epsilon > 0$ there exists a Hilbert space valued map $\alpha : X \to \mH$ with each $\| \alpha_x \| = 1$, 
and an $S > 0$ such that:
\begin{enumerate}
	\item For each $x,y \in X$, if $d(x,y) \leq R$, then $\left| 1 - \langle \alpha_x , \alpha_y \rangle \right| < \epsilon$.
	\item For each $x \in X$,  if $d(x,y) \geq S$ then $\langle \alpha_x , \alpha_y \rangle = 0$
\end{enumerate}
\end{prop}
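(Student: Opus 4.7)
The statement is an equivalence; I would prove the two directions separately.

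For $(\Rightarrow)$, given property A sets $A_x \subset X \times \N$ supported in $B(x, S) \times \N$ with $|A_x \Delta A_y|/|A_x| < \epsilon$ when $d(x,y) \leq R$, I would take
\[
\alpha_x := \frac{\chi_{A_x}}{\sqrt{|A_x|}} \in \ell^2(X \times \N),
\]
so that $\|\alpha_x\| = 1$. Condition (2) (with $S$ replaced by $2S$) is immediate: if $(z,n) \in A_x \cap A_y$ then $d(x,y) \leq d(x,z) + d(z,y) < 2S$, so $A_x \cap A_y = \emptyset$ for $d(x,y) \geq 2S$ and hence $\langle \alpha_x, \alpha_y \rangle = 0$. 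For condition (1), combining the symmetric-difference bound with the comparability $|A_x| \asymp |A_y|$ (obtained by applying the definition symmetrically at $x$ and $y$) yields $|1 - \langle \alpha_x, \alpha_y\rangle|$ bounded by a small multiple of $\epsilon$, which can be absorbed by adjusting parameters.

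For $(\Leftarrow)$, I must manufacture finite subsets of $X \times \N$ from the Hilbert space data $\alpha : X \to \mH$. My plan proceeds in three steps. First, transfer to $\ell^2(X)$ by defining $\Phi_x(y) := \langle \alpha_x, \alpha_y \rangle$; condition (2) forces $\supp(\Phi_x) \subset B(x, S)$, a set of uniformly bounded cardinality by bounded geometry. Since $\Phi_x(x) = 1$, one has $1 \leq \|\Phi_x\|_2 \leq \sqrt{|B(x,S)|}$, uniformly bounded above; and a short calculation gives $\|\Phi_x - \Phi_y\|_2^2 \leq |B(x,R+S)| \cdot \|\alpha_x - \alpha_y\|_2^2$, which is small for $d(x,y) \leq R$ via $\|\alpha_x - \alpha_y\|_2^2 = 2(1 - \operatorname{Re}\langle \alpha_x, \alpha_y\rangle) \leq 2\epsilon$. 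Second, convert to positive $\ell^1$ data by setting $\psi_x := |\Phi_x|^2/\|\Phi_x\|_2^2$, a probability measure on $X$ supported in $B(x, S)$; the identity $|a^2 - b^2| = |a - b|(|a|+|b|)$ together with Cauchy--Schwarz bounds $\|\psi_x - \psi_y\|_1$ by a uniform constant times $\|\Phi_x - \Phi_y\|_2$ (after separately estimating the error from the differing normalizations). Third, discretize: for $N \in \N$ chosen large, set
\[
A_x := \{ (z, n) \in X \times \N : n \leq \lfloor N \psi_x(z) \rfloor \},
\]
which is finite, nonempty, supported in $B(x, S) \times \N$, and satisfies $|A_x \Delta A_y|/|A_x| \approx \|\psi_x - \psi_y\|_1$ up to rounding.

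The main obstacle is the discretization in Step 3: one must choose a single $N$ uniformly in $x$ so that the rounding errors and cardinality comparisons work for all pairs simultaneously. This uniformity relies on uniform upper and lower bounds for $\|\Phi_x\|_2^2$ and on a uniform bound for $|\supp(\psi_x)| \leq |B(x,S)|$, both of which follow from bounded geometry of $X$ together with $\|\alpha_x\| = 1$. A minor subtlety is that $\alpha_x$ may be complex-valued, but this is harmless since the construction only uses the nonnegative real function $|\Phi_x|^2$.
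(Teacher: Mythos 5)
Your forward direction is fine (it is the routine direction: $\alpha_x=\chi_{A_x}/\sqrt{|A_x|}$ with the symmetric use of the hypothesis to compare $|A_x|$ and $|A_y|$), and your Step 3 discretization is also essentially routine, since the integer $N$ may be chosen after $S$ is known. The genuine gap is in Steps 1--2 of the converse, and it is not the uniformity issue you flag at the end. When you set $\Phi_x(y)=\langle\alpha_x,\alpha_y\rangle$, your closeness estimate reads $\|\Phi_x-\Phi_y\|_2^2\leq |B(x,R+S)|\,\|\alpha_x-\alpha_y\|^2\leq 2\epsilon\,|B(x,R+S)|$, while the normalization is only bounded below by $\|\Phi_x\|_2\geq 1$. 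The factor $|B(x,R+S)|$ is not harmless: the hypothesis produces $S$ \emph{after} $R,\epsilon$ are fixed, so you cannot choose $\epsilon$ small compared with $1/\sup_x|B(x,R+S)|$; the quantifier order makes the argument circular. Worse, the intermediate claim is simply false in general: take $d(x,y)\leq R$, and suppose there are $N$ points $z$ with $d(z,x)<S\leq d(z,y)$, with $\alpha_z$ chosen orthogonal to $\alpha_y$ and $\langle\alpha_z,\alpha_x\rangle=\delta\approx\sqrt{2\epsilon}$ (this is exactly what conditions (1)--(2) permit). If $N\delta^2\gg 1$, then $\psi_x$ carries almost all of its mass on these points while $\psi_y$ vanishes there, so $\|\psi_x-\psi_y\|_1$ is close to $2$ even though $\|\alpha_x-\alpha_y\|^2\leq 2\epsilon$. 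So the ``rows of the Gram matrix'' transfer from the abstract Hilbert space to $\ell^2(X)$ destroys the closeness property; this passage is the actual content of the proposition.

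The standard way around this (and, in substance, the argument behind the cited result, Tu's Proposition 3.2 --- the paper itself gives no proof, it only cites it) is to reproduce the kernel rather than its rows. Let $K$ be the operator on $\ell^2(X)$ with matrix $k(x,y)=\langle\alpha_x,\alpha_y\rangle$; it is positive, and bounded with $\|K\|\leq\sup_x|B(x,S)|$ because the kernel has propagation at most $S$, entries of modulus at most $1$, and $X$ has bounded geometry. Setting $\eta_x=K^{1/2}\delta_x$ gives unit vectors in $\ell^2(X)$ with $\langle\eta_x,\eta_y\rangle=k(x,y)$ \emph{exactly}, hence $\|\eta_x-\eta_y\|^2\leq 2\epsilon$ for $d(x,y)\leq R$ with no ball-cardinality loss. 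Finite support is then recovered by approximating $K^{1/2}$ in operator norm by a polynomial $p(K)$ (which has finite propagation, say at most $\deg(p)\cdot S$); the approximation error may be chosen after $S$ is known, and the enlarged support radius is irrelevant since property A allows any finite $S$. After this replacement, your Steps 2 and 3 (squaring to get probability measures via the estimate $\|\,|\xi_x|^2-|\xi_y|^2\|_1\leq\|\xi_x-\xi_y\|_2\,\|\xi_x+\xi_y\|_2$, then discretizing) do complete the proof; note that this $\ell^1$-estimate is exactly the kind of bound, free of bounded-geometry constants, that your $\Phi_x$ construction fails to provide.
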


\begin{defn}\label{defn:ee}
A family $(X_j)_{j \in \I}$ of metric spaces is \emph{equi-exact} if for every $R, \epsilon > 0$ there is a family 
of Hilbert space valued maps $\xi_j : X_j \to \mH$ with $\| \xi_j(x) \| = 1$ for all $x \in X_j$ and an $S>0$ and satisfying:
\begin{enumerate}
	\item For all $j \in \I$ and all $x,y \in X_j$, if $d(x,y) \leq R$, then $\| \xi_j(x) - \xi_j(y) \| \leq \epsilon$.
	\item For all $j\in \I$ and all $x,y \in X_j$, $\langle \xi_j(x), \xi_j(y)\rangle = 0$ if $d(x,y) \geq S$.
\end{enumerate}
\end{defn}

As mentioned above, for a group extension $H \to G \to Q$, Dadarlat and Guentner show in \cite{DG_CPUE}
that if $H$ is coarsely embeddable and $Q$ has property A, then $G$ is coarsely embeddable.   The
assumption that $Q \isom G/H$ has property A is closely related to the notion of relative property A,
developed in \cite{JOR4}.  

\begin{defn}
Let $G$ be a group and $\{ H_1, \ldots, H_n \}$ a finite family of subgroups, and let $K = \sqcup_{i = 1}^{n} G/H_i$.
Then $G$ has \emph{relative property A} with respect to $\{ H_1, \ldots, H_n \}$ if for every $R>0$ and 
$\epsilon > 0$ there exists an $S > 0$ and a collection $A_x$ of finite nonempty subsets of $K \times \N$, 
indexed by $x \in G$, such that:
\begin{enumerate}
	\item For each $x, y \in G$, if $d(x,y) < R$, then \[ \frac{|A_x \Delta A_y|}{|A_x|} < \epsilon. \]
	\item For each $x \in G$, if $(aH_i, n) \in A_x$ then $d(x, aH_i) < S$.
\end{enumerate}
\end{defn}

The cosets of each single subgroup $H_i$ serve to partition the space $G$ into isometric pieces.  We obtain
a family of partitions.  A single element of one of the partitions is indexed by $K$, and distances are
measured between elements of $G$ and elements of $K$.  In the next section we expand this framework to
more general situations by considering families of set maps.

\section{Exact families of maps}
Suppose $(X,d)$ is a metric space.  For a finite family of sets, $\left\{ Y_i \right\}_{i=1}^{n}$,
let $\Y = \sqcup_{i=1}^n Y_i$ denote the disjoint union.
\begin{defn}\label{defn:exactfamily} 
A family of set maps $\left\{ \phi_i : X \to Y_i \right\}_{i=1}^{n}$ is an \emph{exact family
of maps} if for every $R, \epsilon > 0$ there exists an $S > 0$ and a map $\xi : X \to \ell^2( \Y )$,
with $\| \xi_x \| = 1$ for all $x \in X$ (where $\xi_x$ is the element of $\ell^2(\Y)$ associated to $x \in X$) 
and satisfying the following.
\begin{enumerate}
	\item For all $x,y \in X$ if $d(x,y) \leq R$, then $\| \xi_x - \xi_y \| \leq \epsilon$.
	\item For all $x \in X$, $\supp \xi_x \subset \cup_{i} \phi_i\left( B_S(x) \right)$, where $B_S(x)$ denotes the
			ball of radius $S$ in $X$ centered at $x$.
\end{enumerate}
\end{defn}

Let $G$ be a group, and let $\left\{ H_i \right\}_{i=1}^{n}$ be a finite family of subgroups of $G$.  
Let $\left\{\pi_i : G \to G/H_i\right\}$ be the corresponding quotient maps.
\begin{lem}
The group $G$ has relative property A with respect to $\left\{ H_i \right\}_{i=1}^{n}$ if and only if $\left\{ \pi_i \right\}_{i=1}^{n}$ 
is an exact family of maps.  A metric space $(X,d)$ has property A if and only if $\left\{ Id:X \to X \right\}$ is an exact family of maps.
\end{lem}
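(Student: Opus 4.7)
The plan is to deduce both statements from a single Tu-style translation between the combinatorial and Hilbert-space characterizations of (relative) property A. The second statement is in fact a special case of the first framework: taking the family to consist of just the identity map on a metric space, $\Y = X$ and the support condition in Definition \ref{defn:exactfamily} reduces exactly to the support condition in Proposition \ref{prop:PropA}. So once the first equivalence is established the second follows by inspection. I will therefore focus on the first equivalence, which is proved as two separate implications.

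For the forward direction, assume $G$ has relative property A with respect to $\{H_i\}$. Given $R,\epsilon>0$, let $S$ and the collection $\{A_x\}_{x\in G}$ with $A_x \subset K \times \N$ be supplied by the definition (applied with $\epsilon$ replaced by $\epsilon^2/2$). Define
\[ \xi_x := |A_x|^{-1/2}\,\chi_{A_x} \in \ell^2(K\times\N). \]
Then $\|\xi_x\|=1$, and the standard identity $\|\xi_x-\xi_y\|^2 = 2 - 2|A_x\cap A_y|/\sqrt{|A_x||A_y|}$, combined with $2|A_x\cap A_y| = |A_x|+|A_y|-|A_x\Delta A_y|$, gives the required bound $\|\xi_x-\xi_y\|\leq \epsilon$ for $d(x,y)\leq R$. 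The support of $\xi_x$ in $K\times\N$ projects to $\bigcup_i \pi_i(B_S(x)) \subset \Y = K$, which is the support condition required by Definition \ref{defn:exactfamily} (here $\ell^2(\Y\times\N)$ plays the role of $\ell^2(\Y)$; the auxiliary $\N$-factor is a standard unitary enlargement of the target Hilbert space that does not affect the $\Y$-indexed support condition).

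For the reverse direction, assume $\{\pi_i\}$ is an exact family of maps. Given $R,\epsilon>0$, let $S$ and $\xi:G\to\ell^2(\Y)$ be as in Definition \ref{defn:exactfamily}. Replacing $\xi_x$ by $|\xi_x|$ coordinatewise (which can only decrease $\|\xi_x-\xi_y\|$), we may assume $\xi_x(k)\geq 0$ for all $x,k$. Now perform the standard Tu discretization: for a large integer $N$, set
\[ A_x := \{(k,n)\in\Y\times\N : 1\leq n \leq \lfloor N\,\xi_x(k)^2\rfloor\}. \]
The layer-cake/level-set computation used in Tu's proof of Proposition \ref{prop:PropA} converts the $\ell^2$ estimate $\|\xi_x-\xi_y\|^2 \leq \epsilon^2$ into the symmetric-difference estimate $|A_x\Delta A_y|/|A_x|<\epsilon$ (choosing $N$ large enough to absorb rounding errors). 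The support condition $(k,n)\in A_x \Rightarrow k\in\bigcup_i \pi_i(B_S(x))$, i.e.\ $d(x,k)<S$ for $k=aH_i$, is inherited directly from the support condition on $\xi_x$.

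The only genuinely technical step is the discretization in the reverse direction: passing from an $\ell^2$-distance estimate on positive unit vectors to a combinatorial symmetric-difference estimate with uniform control in $R$ and $\epsilon$. This is the standard argument underlying Proposition \ref{prop:PropA}, and the role of the $\N$-factor in $K\times\N$ is precisely to allow the integer floor of the real weights $N\,\xi_x(k)^2$ to serve as an approximating multi-set. Everything else is bookkeeping of supports under $\bigcup_i \pi_i$.
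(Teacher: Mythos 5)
Your argument is correct in substance, but note that the paper does not actually spell out a proof here: its ``proof'' is a one-line citation to Proposition 4.3 of \cite{JOR4} and Proposition 3.2 of \cite{tu_RmkPropA}, and what you have written is essentially the standard normalization/discretization argument that those cited results encapsulate ($\chi_{A_x}/|A_x|^{1/2}$ in one direction, the Tu-style level-set discretization in the other). So you have made explicit what the paper delegates, which is fine and arguably more self-contained. Three points deserve more care than you give them. First, your forward-direction vector lives in $\ell^2(K\times\N)$ while Definition \ref{defn:exactfamily} asks for $\xi_x\in\ell^2(\Y)$ with $\supp\xi_x\subset\cup_i\phi_i(B_S(x))$; calling the $\N$-factor a ``unitary enlargement'' is not quite right, but the fix is easy: collapse by $\tilde\xi_x(k)=\bigl(\sum_n|\xi_x(k,n)|^2\bigr)^{1/2}$, which preserves the unit norm and the support over $\Y$, and satisfies $\|\tilde\xi_x-\tilde\xi_y\|\le\|\xi_x-\xi_y\|$ coordinatewise by the reverse triangle inequality. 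Second, in the reverse direction the rounding errors in $A_x=\{(k,n):n\le\lfloor N\xi_x(k)^2\rfloor\}$ are only absorbed because $\supp\xi_x\subset\cup_i\pi_i(B_S(x))$ has cardinality bounded uniformly in $x$ (properness of the metric and bounded geometry make $B_S(x)$ uniformly finite); you should say this explicitly, since $|A_x\Delta A_y|\le N\sum_k|\xi_x(k)^2-\xi_y(k)^2|+|\supp\xi_x\cup\supp\xi_y|$ and $|A_x|\ge N-|\supp\xi_x|$, and without the uniform finiteness the choice of $N$ could not be made uniformly. Third, for the second statement your phrase ``the support condition in Proposition \ref{prop:PropA}'' is a slight misattribution: that proposition is stated via orthogonality ($\langle\alpha_x,\alpha_y\rangle=0$ for $d(x,y)\ge S$), not via supports; the condition your identity-map specialization matches is condition (2) of the combinatorial definition of property A, and the same two translations you wrote for the relative case apply verbatim with $n=1$, $Y_1=X$, $\phi_1=\mathrm{Id}$. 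With these repairs your proof is complete and matches the content of the results the paper cites.
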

\begin{proof}
This follows from Proposition 4.3 of \cite{JOR4} and Proposition 3.2 of \cite{tu_RmkPropA}.
\end{proof}
In \cite{JOR4}, relative property A was defined only for groups.  The notion of exact families of maps gives a natural framework 
in which to extend this property to metric spaces.

\begin{thm}\label{thm:propA}
Suppose $(X,d)$ is a metric space and $\left\{ \phi_i : X \to Y_i \right\}_{i=1}^{n}$ 
is an exact family of maps.  If $\left\{ \phi_i( w )^{-1} : w \in Y_i , i=1,\ldots, n \right\}$ is an equi-exact
family of metric spaces, then $X$ has property A.
\end{thm}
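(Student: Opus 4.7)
My plan is to characterize property A via Tu's reformulation (Proposition \ref{prop:PropA}) and manufacture the required unit-vector map on $X$ by tensoring the exact-family map $\xi : X \to \ell^2(\Y)$ with the equi-exact fiber maps $\eta^{i,w}$ on the fibers $\phi_i^{-1}(w)$.

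First I would fix $R, \epsilon > 0$ and auxiliary parameters $\epsilon_1, \epsilon_2 > 0$ to be calibrated later. Applying the exact-family hypothesis for $\{\phi_i\}$ with input $(R, \epsilon_1)$ produces $S_1 > 0$ and $\xi : X \to \ell^2(\Y)$ with $\| \xi_x \| = 1$, $\| \xi_x - \xi_y \| \leq \epsilon_1$ whenever $d(x,y) \leq R$, and $\supp \xi_x \subset \bigcup_i \phi_i(B_{S_1}(x))$. Next, invoking equi-exactness of the fiber family with input $(R + 2S_1, \epsilon_2)$ yields $S_2 > 0$ and unit-vector maps $\eta^{i,w} : \phi_i^{-1}(w) \to \mH$ with $\| \eta^{i,w}_u - \eta^{i,w}_v \| \leq \epsilon_2$ on pairs with $d(u,v) \leq R + 2S_1$, and $\langle \eta^{i,w}_u, \eta^{i,w}_v \rangle = 0$ on pairs with $d(u,v) \geq S_2$.

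For each $x \in X$ and each $(i,w) \in \supp \xi_x$, the defining condition exhibits at least one point of $\phi_i^{-1}(w)$ inside $B_{S_1}(x)$; I choose one and call it $\tau_i(x,w)$. I then set
\[ \alpha_x \;=\; \sum_{(i,w) \in \supp \xi_x} \xi_x(i,w) \, e_{(i,w)} \otimes \eta^{i,w}_{\tau_i(x,w)} \;\in\; \ell^2(\Y) \otimes \mH. \]
Since the vectors $e_{(i,w)} \otimes \eta^{i,w}_{\tau_i(x,w)}$ are orthonormal as $(i,w)$ varies over $\supp \xi_x$, we have $\| \alpha_x \| = \| \xi_x \| = 1$.

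Two estimates close the argument. Setting $S := 2S_1 + S_2$, if $d(x,y) \geq S$ and $(i,w) \in \supp \xi_x \cap \supp \xi_y$, the triangle inequality gives $d(\tau_i(x,w), \tau_i(y,w)) \geq d(x,y) - 2S_1 \geq S_2$, so $\langle \eta^{i,w}_{\tau_i(x,w)}, \eta^{i,w}_{\tau_i(y,w)} \rangle = 0$, and summation over $(i,w)$ yields $\langle \alpha_x, \alpha_y \rangle = 0$. Conversely, for $d(x,y) \leq R$ and $(i,w)$ in both supports, $d(\tau_i(x,w), \tau_i(y,w)) \leq R + 2S_1$, whence $\| \eta^{i,w}_{\tau_i(x,w)} - \eta^{i,w}_{\tau_i(y,w)} \| \leq \epsilon_2$; a direct expansion of $\| \alpha_x - \alpha_y \|^2$, splitting by whether $(i,w)$ lies in one, both, or neither support, yields $\| \alpha_x - \alpha_y \|^2 \leq \| \xi_x - \xi_y \|^2 + \epsilon_2^2 \leq \epsilon_1^2 + \epsilon_2^2$. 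The identity $\| u-v \|^2 = 2(1 - \langle u, v\rangle)$ for unit vectors converts this to $| 1 - \langle \alpha_x, \alpha_y \rangle| \leq (\epsilon_1^2 + \epsilon_2^2)/2$, meeting condition (1) of Proposition \ref{prop:PropA} after choosing $\epsilon_1, \epsilon_2$ small. The subtle step is the parameter bookkeeping: I must anticipate the drift $2S_1$ between $\tau_i(x,w)$ and $\tau_i(y,w)$ and feed $(R + 2S_1, \epsilon_2)$ into the equi-exact hypothesis so that the close-points estimate closes, while still enlarging $S$ by the same $2S_1$ so that the orthogonality estimate closes.
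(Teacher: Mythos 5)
Your construction is essentially the paper's own proof: the paper likewise sets $\xi_x(w) = \alpha_x(w)\,\beta_{(i,w)}(\eta(x,w))$ in $\ell^2(\Y,\mH)$ with $\eta(x,w)$ a nearest point of $\phi_i^{-1}(w)$ to $x$, feeds the enlarged parameter $R + 2S_X$ into the equi-exact hypothesis, and takes the final separation $2S_X + S_Y$, exactly matching your bookkeeping. The only cosmetic differences are that the paper estimates $|1 - \langle \xi_x, \xi_y\rangle|$ directly rather than $\| \alpha_x - \alpha_y\|$, and your identity $\|u-v\|^2 = 2(1 - \langle u,v\rangle)$ should read $2(1 - \operatorname{Re}\langle u,v\rangle)$ for complex scalars --- a harmless adjustment that does not affect the argument.
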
 
This is related to Theorem 3.1 of \cite{DG_UERHG}.
\begin{proof}
Fix $R, \epsilon > 0$.  Denote by $\Y$ the disjoint union $\sqcup_{i=1}^n Y_i$.
For $x \in X$ and $w \in Y_i$, let $\eta(x,w)$ denote a point in $\phi_i^{-1}(w)$ closest to $x$.  By the triangle inequality
we have $d(x,y) \leq d(\eta(x,w), \eta(y,w) ) + d(x, \phi_i^{-1}(w)) + d(y, \phi_i^{-1}(w))$
and $d( \eta(x,w), \eta(y, w) ) \leq d(x,y) + d(x, \phi_i^{-1}(w)) + d(y, \phi_i^{-1}(w))$ for $w \in Y_i$, and all $x,y \in X$.

By Definition \ref{defn:exactfamily} there is a map $\alpha : X \to \ell^2(\Y)$ with each $\| \alpha_x \| = 1$, 
and an $S_X > 0$ such that:
\begin{enumerate}
	\item For each $x,y \in X$, if $d(x,y) \leq R$, then 
		$\left| 1 - \langle \alpha_x , \alpha_y \rangle \right| < \frac{\epsilon}{2}$.
	\item For each $x \in X$,  if $\alpha_x( w ) \neq 0$ then $w \in \cup_i \phi_i \left( B_{S_X}(x) \right)$.
\end{enumerate}

Let $\I = \left\{ (i, w) : i=1,\ldots, n, \,\, w \in Y_i \right\}$.  Note that there is a natural identification of $\I$ with $\Y$.
For convenience, we will refer to $j = (i,w)$ as $w$.  It will be understood that $i$ is then determined by a choice of $w \in Y_i$. 
For $j = (i,w) \in \I$, set $X_j = \phi_i^{-1}(w)$.
By Definition \ref{defn:ee}, there is an $S_Y > 0$, a 
Hilbert space $\mH$, and for each $j = (i,w) \in \I$ a $\beta_j : \phi_i^{-1}(w) \to \mH$ with $\| \beta_j(s) \| = 1$ for all $s \in X_j$, and such that
\begin{enumerate}
	\item For all $s,t \in X_j$, $\left| 1 - \langle \beta_j(s), \beta_j(t) \rangle \right| \leq \frac{\epsilon}{2}$ whenever 
		$d(s,t) \leq 2S_X + R$.
	\item For all $s,t \in X_j$ if $d(s,t) \geq S_Y$, then
		$\langle \beta_j(s) , \beta_j(t) \rangle = 0$.
\end{enumerate}	

Define $\xi : X \to \ell^2( \Y, \mH )$ by
\[ \xi_x( w ) = \alpha_x( w ) \beta_j( \eta( x, w ) ), \,\,\,\, \textrm{ for all } x \in X, w \in Y_i. \]

For any $x \in X$, 
\begin{align*}
	\| \xi_x \|^2 &= \sum_{w \in \Y} \| \xi_x(w) \|^2\\
		&= \sum_{w \in \Y} | \alpha_x(w) |^2 \| \beta_j( \eta(x,w) ) \|^2 \\
		&= 1.
\end{align*}
It remains to verify the two remaining properties of Proposition \ref{prop:PropA} for $\xi$.

If $x,y \in X$ with $d(x,y) \leq R$, then
\begin{align*} |1 - \langle \xi_x , \xi_y \rangle | \leq & \left| \sum_{w \in \Y} \left( 1 - \langle \beta_j(\eta(x,w)), \beta_j(\eta(y,w)) \rangle \right) \alpha_x(w) \alpha_y(w) \right| \\
	& + | 1 - \langle \alpha_x , \alpha_y \rangle |. \end{align*}
The second term is bounded by $\frac{\epsilon}{2}$.  Moreover, the first sum is over
$w \in \phi_i ( B_{S_X}(x) ) \cap \phi_i ( B_{S_X}(y))$.  Since $\| \alpha_x \| = \| \alpha_y \| = 1$ this sum is bounded
by
\[ \sup\left\{  \left| 1 - \langle \beta_j(\eta(x,w)), \beta_j(\eta(y, w))\rangle \right| : 
	w \in \phi_i( B_{S_X}(x) ) \cap \phi_i( B_{S_X}(y)), i = 1,\ldots,n \right\}. \]
As each such $w$ satisfies $d( \eta(x,w), \eta(y,w) ) \leq R + 2 S_X$, this sum is bounded by $\frac{\epsilon}{2}$.

Let $x,y \in X$ be such that $d(x,y) \geq 2S_X + S_Y$.  Then
\[ \langle \xi_x , \xi_y \rangle = \sum_{w \in \Y} \alpha_x(w) \alpha_y(w) \langle \beta_j( \eta(x, w) ), \beta_j( \eta(y, w)) \rangle. \]
The support condition on $\alpha_x$ and $\alpha_y$ ensure the sum is over $w \in \cup_i \left(\phi_i ( B_{S_X}(x) ) \cap \phi_i ( B_{S_X}(y))\right)$.
For each $i$ and each $w \in \phi_i( B_{S_X}(x) ) \cap \phi_i( B_{S_X}(y) )$, we have  
$d( \eta(x, w), \eta(y, w) ) \geq d(x,y) - d(x, \phi_i^{-1}(w)) - d(y, \phi_i^{-1}(w)) \geq S_Y$.  As such, this sum is zero.
\end{proof}

\section{Strong embeddability}

\begin{defn}\label{defn:SE}
Let $(X,d)$ be a metric space.  Then $X$ is strongly embeddable\footnotemark if and only if for every $R, \epsilon > 0$ there
exists a Hilbert space valued map $\beta : X \to \ell^2(X)$ with $\| \beta_x \| = 1$ for each $x \in X$, and satisfying:
\begin{enumerate}
	\item[(1)] If $d(x,y) \leq R$, then $\| \beta_x - \beta_y \| \leq \epsilon$.
	\item[(2)] $\lim_{S \to \infty} \sup_{x \in X} \sum_{w \notin B_S(x)} |\beta_x(w)|^2 = 0$.
\end{enumerate}
\end{defn}

\footnotetext{It is possible to define strong embeddability in terms of a general Hilbert space.  In that framework, it is clear that strong embeddability is a coarse invariant of metric spaces.}

\begin{lem}
Condition (2) in Definition \ref{defn:SE} can be replaced by the following:
\begin{enumerate}
	\item[(2')] $\lim_{S \to \infty}	\sup_{x,y \in X} \sum_{w \notin \left(B_S(x) \cap B_S(y) \right)} |\beta_x(w)\beta_y(w) | = 0$.
\end{enumerate}
\end{lem}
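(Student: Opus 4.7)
The plan is to show that conditions (2) and (2') are equivalent for any unit-norm map $\beta : X \to \ell^2(X)$, so that the same $\beta$ witnesses strong embeddability for both formulations of the definition.

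The implication (2') $\Rightarrow$ (2) is immediate: specializing to $y = x$ in (2'), the intersection $B_S(x) \cap B_S(y)$ becomes $B_S(x)$, and $|\beta_x(w)\beta_y(w)| = |\beta_x(w)|^2$, so the sup in (2') reduces to the sup in (2).

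For (2) $\Rightarrow$ (2'), the key observation is that the complement of an intersection is the union of complements, namely
\[ \{w : w \notin B_S(x) \cap B_S(y)\} \;=\; \{w : w \notin B_S(x)\} \cup \{w : w \notin B_S(y)\}, \]
so the sum appearing in (2') splits as
\[ \sum_{w \notin B_S(x) \cap B_S(y)} |\beta_x(w)\beta_y(w)| \;\leq\; \sum_{w \notin B_S(x)} |\beta_x(w)\beta_y(w)| + \sum_{w \notin B_S(y)} |\beta_x(w)\beta_y(w)|. \]
Applying the Cauchy-Schwarz inequality to each of the two sums on the right and using $\|\beta_x\| = \|\beta_y\| = 1$, each is bounded by a square-root expression of the type appearing in (2); for example,
\[ \sum_{w \notin B_S(x)} |\beta_x(w)\beta_y(w)| \;\leq\; \Bigl(\sum_{w \notin B_S(x)} |\beta_x(w)|^2\Bigr)^{1/2}\Bigl(\sum_{w \notin B_S(x)} |\beta_y(w)|^2\Bigr)^{1/2} \;\leq\; \Bigl(\sup_{z \in X}\sum_{w \notin B_S(z)} |\beta_z(w)|^2\Bigr)^{1/2}. \]
Taking the supremum over $x, y \in X$ and invoking (2) as $S \to \infty$ yields (2').

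There is no real obstacle here; the only delicate point is the set-theoretic identity for the complement of the intersection, which ensures the off-diagonal sum can be bounded purely in terms of the diagonal tails that appear in (2).
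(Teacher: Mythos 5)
Your proof is correct and follows essentially the same route as the paper: both directions reduce to splitting the complement of $B_S(x) \cap B_S(y)$ into the two tail regions $\{w \notin B_S(x)\}$ and $\{w \notin B_S(y)\}$ and bounding each piece by the diagonal tails from condition (2). The only difference is that you invoke Cauchy--Schwarz explicitly to get square roots of the tails (which, if anything, is a slightly more careful justification of the final estimate than the paper's), so there is nothing further to add.
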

\begin{proof}
	Condition (2') with $x = y$ yields condition (2).  For the other direction, assume condition (2) is satisfied,  fix any $x,y \in X$, and
	let $S > 0$ be given.
	\begin{align*}
		\sum_{w \notin \left(B_S(x) \cap B_S(y) \right)} |\beta_x(w)\beta_y(w) | &= 
				\sum_{w \notin B_S(x)} |\beta_x(w)\beta_y(w) | + \sum_{w \in \left( B_S(x) \setminus B_S(y) \right)} |\beta_x(w)\beta_y(w) | \\
			& \leq \sum_{w \notin B_S(x)} |\beta_x(w)\beta_y(w) | + \sum_{w \notin B_S(y)} |\beta_x(w)\beta_y(w) | \\
			& \leq \sum_{w \notin B_S(x)} |\beta_x(w)|^2 + \sum_{w \notin B_S(y)} |\beta_y(w) |^2
	\end{align*}
	Each of these terms tend to zero as $S \to \infty$, uniform in $x,y \in X$.	
\end{proof}

\begin{lem}
Strongly embeddable metric spaces are coarsely embeddable.
\end{lem}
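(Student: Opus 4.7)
The plan is to show directly that the Hilbert-valued map $\beta$ witnessing strong embeddability is itself (after possibly halving tolerances) a witness to the Dadarlat--Guentner characterization of coarse embeddability in Proposition \ref{prop:CE}, viewing $\ell^2(X)$ as the Hilbert space $\mH$. Condition (1) of coarse embeddability is essentially given for free: the remark after Proposition \ref{prop:CE} says that one may replace the inner-product form of condition (1) by the norm form $\|\beta_x - \beta_y\| \leq \epsilon$, which is exactly condition (1) of Definition \ref{defn:SE}. So the only work is converting the localization condition (2) of strong embeddability into the inner-product decay condition (2) of coarse embeddability.

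The key geometric observation is ball disjointness: if $d(x,y) \geq 2S$, then $B_S(x)\cap B_S(y) = \emptyset$, so every index $w \in X$ lies outside at least one of the two balls. First I would split
\[ |\langle \beta_x, \beta_y\rangle| \;\leq\; \sum_{w\in B_S(x)} |\beta_x(w)\beta_y(w)| \;+\; \sum_{w\notin B_S(x)} |\beta_x(w)\beta_y(w)|. \]
For the first sum, every $w\in B_S(x)$ satisfies $w\notin B_S(y)$ when $d(x,y)\geq 2S$; applying Cauchy--Schwarz together with $\|\beta_x\|=1$ bounds it by $\bigl(\sum_{w\notin B_S(y)}|\beta_y(w)|^2\bigr)^{1/2}$. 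The second sum is likewise bounded by $\bigl(\sum_{w\notin B_S(x)}|\beta_x(w)|^2\bigr)^{1/2}$ via Cauchy--Schwarz and $\|\beta_y\|=1$.

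Condition (2) of Definition \ref{defn:SE} then takes both error terms to $0$ as $S\to\infty$, uniformly in $x,y\in X$. Concretely, given $\delta>0$, choose $S$ so that $\sup_{z\in X}\sum_{w\notin B_S(z)}|\beta_z(w)|^2 < \delta^2/4$; then $d(x,y)\geq 2S$ forces $|\langle \beta_x,\beta_y\rangle| < \delta$, yielding
\[ \lim_{T\to\infty}\sup\bigl\{|\langle \beta_x,\beta_y\rangle| : d(x,y)\geq T\bigr\} = 0, \]
which is condition (2) of Proposition \ref{prop:CE}. Applying the proposition then finishes the proof.

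I do not expect any real obstacle: the argument is a two-line Cauchy--Schwarz split using the disjoint-balls trick, and the only subtle point is the standard remark that the two equivalent forms of condition (1) in Proposition \ref{prop:CE} allow us to transfer the norm-form hypothesis on $\beta$ directly without reworking it through inner products.
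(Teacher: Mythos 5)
Your proposal is correct and follows essentially the same route as the paper: both verify the Dadarlat--Guentner criterion by splitting $|\langle\beta_x,\beta_y\rangle|$ into the sum over a ball around $x$ and its complement, using that the two balls are disjoint when $x,y$ are far apart, and letting condition (2) of Definition \ref{defn:SE} kill both tails uniformly. The only cosmetic difference is that you bound the two pieces via Cauchy--Schwarz (getting square roots of the tail sums), while the paper bounds them directly by the tail sums themselves.
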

\begin{proof}
Suppose $(X,d)$ is strongly embeddable, and fix $R, \epsilon > 0$.  Take $\beta : X \to \ell^2(X)$ from the definition.

Fix an $S>0$ and take $d(x,y) \geq S$.
\begin{align*}
	\left| \langle \beta_x , \beta_y \rangle \right| 
		& \leq \sum_{w \in X } \left| \beta_x(w) \beta_y(w) \right| \\
		& = \sum_{w \notin B_{S/2}(x) } \left| \beta_x(w) \beta_y(w) \right| + \sum_{w \in B_{S/2}(x) } \left| \beta_x(w) \beta_y(w) \right| \\
		& \leq \sum_{w \notin B_{S/2}(x) } \left| \beta_x(w) \right|^2 + \sum_{w \in B_{S/2}(x) } \left| \beta_y(w) \right|^2 \\
		& \leq \sum_{w \notin B_{S/2}(x) } \left| \beta_x(w) \right|^2 + \sum_{w \notin B_{S/2}(y) } \left| \beta_y(w) \right|^2
\end{align*}
Thus, $\lim_{S \to \infty} \sup \left\{ \left| \langle \beta_x , \beta_y \rangle \right| : d(x,y) \geq S \right\} = 0$.
\end{proof}

The following is a weakened version of an exact family of maps.  As an exact family of maps captures relative property A,
a strongly embeddable family of maps will capture relative coarse embeddability.
\begin{defn}\label{defn:weakexactfamily} 
A family of set maps $\left\{ \phi_i : X \to Y_i \right\}_{i=1}^{n}$ is a \emph{strongly embeddable family
of maps} if for every $R, \epsilon > 0$ there exists a map $\xi : X \to \ell^2( \Y )$,
with $\| \xi_x \| = 1$ for all $x \in X$ and satisfying the following:
\begin{enumerate}
	\item[(1)] For all $x,y \in X$ if $d(x,y) \leq R$, then $\| \xi_x - \xi_y \| \leq \epsilon$.
	\item[(2)] \[ \lim_{S\to\infty} 
	 	\sup_{x \in X}  \sum_{w \notin \cup_i \phi_i ( B_{S}(x) ) } \left| \xi_x(w) \right|^2  = 0.\]
\end{enumerate}
\end{defn}

In direct analogy to Theorem \ref{thm:propA}, we have
\begin{thm}\label{thm:ce}
Suppose $(X,d)$ is a uniformly discrete bounded geometry metric space, and let $\left\{ \phi_i : X \to Y_i \right\}_{i=1}^{n}$ 
be a strongly embeddable family of maps.  If the preimages $\left\{ \phi_i( w )^{-1} : w \in Y_i , i=1,\ldots, n \right\}$ form an equi-coarsely embeddable
family of metric spaces, then $X$ is coarsely embeddable.
\end{thm}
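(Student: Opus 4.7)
The plan is to mirror the proof of Theorem \ref{thm:propA}, substituting the Dadarlat--Guentner characterization of coarse embeddability (Proposition \ref{prop:CE}) for the Tu characterization of property A, and replacing the exact-support argument for $\alpha$ by a tail estimate combined with a Cauchy--Schwarz bound.

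First I would fix $R, \epsilon > 0$ and introduce small auxiliary parameters $\epsilon_1, \delta_X > 0$, to be chosen in terms of $\epsilon$ at the end. Definition \ref{defn:weakexactfamily} applied with $(R, \epsilon_1)$ yields a unit-norm map $\alpha : X \to \ell^2(\Y)$ satisfying $\|\alpha_x - \alpha_y\| \leq \epsilon_1$ whenever $d(x,y) \leq R$, together with a radius $S_X > 0$ for which $\sum_{w \notin \cup_i \phi_i(B_{S_X}(x))} |\alpha_x(w)|^2 < \delta_X$ uniformly in $x$. Applying Definition \ref{defn:ece} with parameters $(R + 2S_X, \epsilon_1)$ to the equi-coarsely embeddable family $\{\phi_i^{-1}(w)\}_{(i,w)\in \I}$ then produces unit-norm maps $\beta_j : \phi_i^{-1}(w) \to \mH$ with $\|\beta_j(s) - \beta_j(t)\| \leq \epsilon_1$ whenever $d(s,t) \leq R + 2S_X$, and with $|\langle \beta_j(s), \beta_j(t)\rangle|$ tending to zero uniformly in $j$ as $d(s,t) \to \infty$. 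Exactly as in Theorem \ref{thm:propA} I would set $\xi_x(w) := \alpha_x(w)\beta_j(\eta(x,w))$, where $\eta(x,w)$ is a point of $\phi_i^{-1}(w)$ closest to $x$; the unitality $\|\xi_x\| = 1$ is then immediate.

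The remaining verifications follow the template of Theorem \ref{thm:propA}, but with one substantive modification. For Proposition \ref{prop:CE}(1) with $d(x,y) \leq R$, I would rewrite $1 - \langle \xi_x, \xi_y\rangle$ as $(1 - \langle \alpha_x, \alpha_y\rangle) + \sum_w \alpha_x(w)\alpha_y(w)\bigl(1 - \langle \beta_j(\eta(x,w)), \beta_j(\eta(y,w))\rangle\bigr)$, then partition the sum according to whether $w$ lies in $A := \bigcup_i \phi_i(B_{S_X}(x)) \cap \bigcup_i \phi_i(B_{S_X}(y))$. On $A$ the estimate $d(\eta(x,w),\eta(y,w)) \leq R + 2S_X$ gives an $\epsilon_1$-bound on each $\beta$-factor; off $A$ the $\beta$-factor is bounded crudely by $2$, and Cauchy--Schwarz together with the $\alpha$-tail estimate gives $\sum_{w \notin A} |\alpha_x(w)\alpha_y(w)| \leq 2\sqrt{\delta_X}$. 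Proposition \ref{prop:CE}(2) proceeds symmetrically: once $d(x,y) \geq 2S_X + S_Y$ with $S_Y$ furnished by the limit in Definition \ref{defn:ece}, the on-$A$ contribution is small because each $d(\eta(x,w),\eta(y,w)) \geq S_Y$ forces the corresponding $\beta$-inner product to be small, and the off-$A$ contribution is again absorbed by $2\sqrt{\delta_X}$. The main obstacle, and the one genuinely new ingredient compared to Theorem \ref{thm:propA}, is precisely this Cauchy--Schwarz absorption of the $\alpha$-tail, which must replace the exact-support reasoning now that $\alpha$ no longer has support contained in $\cup_i \phi_i(B_{S_X}(x))$.
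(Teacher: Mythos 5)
Your construction and your handling of Proposition \ref{prop:CE}(1) coincide with the paper's proof: the paper also sets $\xi_x(w)=\alpha_x(w)\beta_j(\eta(x,w))$, splits the sum at $\cup_i\left(\phi_i(B_{S_X}(x))\cap\phi_i(B_{S_X}(y))\right)$, and absorbs the off-set contribution by the $\ell^2$-tail of $\alpha$; your explicit Cauchy--Schwarz step is precisely what lies behind the two-variable tail condition the paper asserts for $\alpha$ (compare the lemma following Definition \ref{defn:SE} converting (2) into (2')). The gap is in your verification of Proposition \ref{prop:CE}(2). You freeze both the radius $S_X$ and the tail bound $\delta_X$ once and for all, chosen in terms of $\epsilon$. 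Your far-distance estimate then reads $|\langle\xi_x,\xi_y\rangle|\leq \sup_j\sup\{|\langle\beta_j(s),\beta_j(t)\rangle| : d(s,t)\geq S_Y\} + 2\sqrt{\delta_X}$ for $d(x,y)\geq 2S_X+S_Y$, and letting $d(x,y)\to\infty$ only kills the first term; the second is a constant independent of the distance. So what you actually prove is $\limsup_{S\to\infty}\sup\{|\langle\xi_x,\xi_y\rangle| : d(x,y)\geq S\}\leq 2\sqrt{\delta_X}$, which is not the condition $\lim = 0$ required by Proposition \ref{prop:CE}(2) as you cite it.

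The repair is exactly the extra step the paper takes: Definition \ref{defn:weakexactfamily}(2) is a limit statement for the \emph{fixed} map $\alpha$, so for each $\delta>0$ you may choose a new, larger radius $S'_X=S'_X(\delta)$ with $\sup_{x}\sum_{w\notin\cup_i\phi_i(B_{S'_X}(x))}|\alpha_x(w)|^2$ small enough that the off-set sum is below $\delta/2$, and then run your estimate for $d(x,y)\geq 2S'_X(\delta)+S_Y(\delta)$ (the splitting set and the distance threshold must use the same radius), giving $|\langle\xi_x,\xi_y\rangle|<\delta$ and hence the genuine limit. Alternatively, the weaker property you do establish (far inner products eventually bounded by a quantity tending to $0$ with $\epsilon$) does imply coarse embeddability via the standard direct-sum-over-$(R_k,\epsilon_k)$ argument, but that is not Proposition \ref{prop:CE} as stated, so you would need to supply or cite that additional argument; as written, the claimed verification of condition (2) fails.
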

\begin{proof}

Pick $R, \epsilon > 0$.  By Definition \ref{defn:weakexactfamily} there is a map $\alpha : X \to \ell^2(\Y)$ with each $\| \alpha_x \| = 1$, 
and an $S_X > 0$ such that:
\begin{enumerate}
	\item For each $x,y \in X$, if $d(x,y) \leq R$, then 
		$\left| 1 - \langle \alpha_x , \alpha_y \rangle \right| < \frac{\epsilon}{3}$.
	\item For each $x,y \in X$,  $\sum_{w \notin \cup_i \phi_i ( B_{S_X}(x) ) \cap \phi_i ( B_{S_X}(y))} \left| \alpha_x(w) \alpha_y(w) \right| < \frac{\epsilon}{6}$.
\end{enumerate}

Let $\I = \left\{ (i, w) : i=1,\ldots, n, \,\, w \in Y_i \right\}$.  
For $j = (i,w) \in \I$, set $X_j = \phi_i^{-1}(w)$.
By Definition \ref{defn:ece}, there is a 
Hilbert space, $\mH$, and for each $j = (i,w) \in \I$ a $\beta_j : \phi_i^{-1}(w) \to \mH$ with $\| \beta_j(s) \| = 1$ for all $s \in X_j$, and such that
for each $\delta > 0$ there is an $S_Y > 0$ satisfying:
\begin{enumerate}
	\item For all $s,t \in X_j$, $\left| 1 - \langle \beta_j(s), \beta_j(t) \rangle \right| \leq \frac{\epsilon}{3}$ whenever 
		$d(s,t) \leq 2S_X + R$.
	\item For all $s,t \in X_j$ if $d(s,t) \geq S_Y$, then
		$\langle \beta_j(s) , \beta_j(t) \rangle \leq \frac{\delta}{2}$.
\end{enumerate}	

Define $\xi : X \to \ell^2( \Y, \mH )$ by
\[ \xi_x( w ) = \alpha_x( w ) \beta_j( \eta( x, w ) ), \,\,\,\, \textrm{ for all } x \in X, w \in Y_i. \]

For any $x \in X$, $\| \xi_x \|^2  = 1$.
We verify the two remaining properties of Proposition \ref{prop:CE} for $\xi$.

If $x,y \in X$ with $d(x,y) \leq R$, then
\begin{align*} 
|1 - \langle \xi_x , \xi_y \rangle | \leq & \left| \sum_{w \in \Y} \left( 1 - \langle \beta_j(\eta(x,w)), \beta_j(\eta(y,w)) \rangle \right) \alpha_x(w) \alpha_y(w) \right| \\
	& + | 1 - \langle \alpha_x , \alpha_y \rangle |. 
\end{align*}
The second term is bounded by $\frac{\epsilon}{3}$.  Moreover, the summation can be split as
the sum over $w \in \cup_i \phi_i( B_{S_X}(x) ) \cap \phi_i( B_{S_X}(y))$ plus the sum over $w \notin \cup_i \phi_i( B_{S_X}(x) ) \cap \phi_i( B_{S_X}(y))$.

Consider the sum over $w \in \cup_i \phi_i ( B_{S_X}(x) ) \cap \phi_i ( B_{S_X}(y))$.  Since $\| \alpha_x \| = \| \alpha_y \| = 1$ this sum is bounded by
\[ \sup\left\{  \left| 1 - \langle \beta_j(\eta(x,w)), \beta_j(\eta(y, w))\rangle \right| : 
	w \in \phi_i( B_{S_X}(x) ) \cap \phi_i( B_{S_X}(y)), i = 1,\ldots,n \right\}. \]
As every such $w$ satisfies $d( \eta(x,w), \eta(y,w) ) \leq R + 2 S_X$, this sum is bounded by $\frac{\epsilon}{3}$.

The sum over $w \notin \cup_i \phi_i ( B_{S_X}(x) ) \cap \phi_i ( B_{S_X}(y))$ is bounded by,
\[\sum_{w \notin \cup_i \phi_i ( B_{S_X}(x) ) \cap \phi_i ( B_{S_X}(y))} \left| 1 - \langle \beta_j(\eta(x,w)), \beta_j(\eta(y,w)) \rangle \right| \left|\alpha_x(w) \alpha_y(w) \right|. \]
As $\| \beta_j( \eta(x,w) ) \| = \| \beta_j ( \eta(y,w)) \| = 1$, $\left| 1 - \langle \beta_j(\eta(x,w)), \beta_j(\eta(y,w)) \rangle \right| \leq 2$.  By the choice of $\alpha$, this sum is bounded by $\frac{\epsilon}{3}$. 
Thus $| 1 - \langle \xi_x, \xi_y \rangle | \leq \epsilon$.

By Definition \ref{defn:weakexactfamily}, there is $S'_X > 0$ such that
$\sum_{w \notin \cup_i \phi_i ( B_{S'_X}(x) ) \cap \phi_i ( B_{S'_X}(y))} \left| \xi_x(w) \xi_y(w) \right| < \frac{\delta}{2}$.

Let $x,y \in X$ be such that $d(x,y) \geq 2S'_X + S_Y$.  Then
\begin{align*}
| \langle \xi_x , \xi_y \rangle | &= \left| \sum_{w \in \Y} \alpha_x(w) \alpha_y(w) 
					\langle \beta_j( \eta(x, w) ), \beta_j( \eta(y, w)) \rangle \right| \\
	&\leq \sum_{w \in \Y} |\alpha_x(w) \alpha_y(w)| 
		\left| \langle \beta_j( \eta(x, w) ), \beta_j( \eta(y, w)) \rangle \right|.
\end{align*}
This sum splits into the sum over $w \in \cup_i \phi_i( B_{S'_X}(x) ) \cap \phi_i( B_{S'_X}(y) )$ plus the sum over
$w \notin \cup_i \phi_i( B_{S'_X}(x) ) \cap \phi_i( B_{S'_X}(y) )$.

For $w \in \cup_i \phi_i( B_{S'_X}(x) ) \cap \phi_i( B_{S'_X}(y) )$, that $\|\alpha_x \| = \| \alpha_y \| = 1$ gives that this
sum is bounded by 
 \[ \sup \left\{ \left| \langle \beta_j(\eta(x, w)), \beta_j(\eta(y, w)) \rangle \right| :  
		w \in \phi_i( B_{S'_X}(x) ) \cap \phi_i( B_{S'_X}(y) ) , i=1, \ldots, n \right\}. \]
For each $i$ and each $w \in \phi_i( B_{S'_X}(x) ) \cap \phi_i( B_{S'_X}(y) )$, we have  
$d( \eta(x, w), \eta(y, w) ) \geq d(x,y) - d(x, \phi_i^{-1}(w)) - d(y, \phi_i^{-1}(w)) \geq S_Y$.  As such, this supremum is bounded 
by $\frac{\delta}{2}$.

Consider the sum over $w \notin \cup_i \phi_i( B_{S'_X}(x) ) \cap \phi_i( B_{S'_X}(y) )$.  As 
$\| \beta_j( \eta(x,w) ) \| = \| \beta_j( \eta(y,w) \| = 1$, $|\langle \beta_j( \eta(x,w) ) , \beta_j( \eta(y,w) ) \rangle | \leq 1$.
By the choice of $S'_X$, this sum is bounded by $\frac{\delta}{2}$.  Thus, $| \langle \xi_x , \xi_y \rangle | < \delta$.
\end{proof}

\begin{defn}
A family $(X_j)_{j \in \I}$ of metric spaces is \emph{equi-strongly embeddable} if for every $R, \epsilon > 0$ there is a 
family of Hilbert space valued maps $\xi^j : X_j \to \ell^2( X_j )$ with $\| \xi^j_x \| = 1$ for all $x \in X_j$ and satisfying:
\begin{enumerate}
	\item For all $j \in \I$ and all $x,y \in X_j$, if $d(x,y) \leq R$, then $\| \xi^j_x - \xi^j_y \| \leq \epsilon$.
    \item $\lim_{S\to\infty} \sup_{j \in \I} 
	 	\sup_{x \in X_j} \sum_{w \notin B_S(x)} | \xi^j_x(w) |^2 = 0$.
\end{enumerate}
\end{defn}

\begin{cor}\label{cor:stronglyembeddable}
Suppose $(X,d)$ is a uniformly discrete bounded geometry metric space, and let $\left\{ \phi_i : X \to Y_i \right\}_{i=1}^{n}$ 
be a strongly embeddable family of maps.  If the preimages $\left\{ \phi_i( w )^{-1} : w \in Y_i , i=1,\ldots, n \right\}$ are an 
equi-strongly embeddable family of metric spaces, then $X$ is strongly embeddable.
\end{cor}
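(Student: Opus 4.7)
The plan is to mirror the construction from the proof of Theorem \ref{thm:ce}, upgrading the inner-product decay estimate into a pointwise $\ell^2$-tail estimate in the $X$-coordinate. Given $R, \epsilon > 0$, I would first apply Definition \ref{defn:weakexactfamily} to obtain a unit-norm $\alpha : X \to \ell^2(\Y)$ together with $S_X > 0$ controlling both the close-points condition (with tolerance $\epsilon/3$, say) and, for any preassigned $\delta > 0$, the tail $\sum_{w \notin \cup_i \phi_i(B_{S_X}(x))} |\alpha_x(w)|^2 < \delta/2$. Equi-strong embeddability of $\{\phi_i^{-1}(w)\}$ then produces, for each $j = (i,w) \in \I$, a unit-norm $\beta^{(i,w)} : \phi_i^{-1}(w) \to \ell^2(\phi_i^{-1}(w))$ whose close-points tolerance is set to handle scale $2S_X + R$ and whose tail parameter $S_Y$ is uniform in $j$. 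The candidate map is
\[ \xi_x(z, w) = \alpha_x(w)\, \beta^{(i,w)}_{\eta(x,w)}(z), \qquad (z, w) \in X \times \Y, \]
extended by zero when $z \notin \phi_i^{-1}(w)$, giving $\xi : X \to \ell^2(X \times \Y)$ with $\|\xi_x\| = 1$. The $X$-factor of the index set is what will carry the tail condition, and the resulting map fits the general-Hilbert-space formulation of strong embeddability described in the footnote to Definition \ref{defn:SE}.

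The close-points condition would proceed exactly as in the first half of the proof of Theorem \ref{thm:ce}. Expanding
\[ \langle \xi_x, \xi_y\rangle = \sum_{w \in \Y} \alpha_x(w) \overline{\alpha_y(w)} \, \langle \beta^{(i,w)}_{\eta(x,w)}, \beta^{(i,w)}_{\eta(y,w)} \rangle, \]
the contribution from $w \in \cup_i \phi_i(B_{S_X}(x)) \cap \phi_i(B_{S_X}(y))$ is controlled by the near-$1$ estimate for the $\beta^j$ inner products at scale $R + 2S_X$ (since $d(\eta(x,w), \eta(y,w)) \leq R + 2S_X$ there), while the remaining contribution is absorbed by the tail of $\alpha$. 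This produces $\|\xi_x - \xi_y\| \leq \epsilon$ whenever $d(x,y) \leq R$.

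The genuinely new content is condition (2) of Definition \ref{defn:SE}. For $S > 0$, Fubini gives
\[ \sum_{z \notin B_S(x)} \sum_{w \in \Y} |\xi_x(z,w)|^2 \;=\; \sum_{w \in \Y} |\alpha_x(w)|^2 \sum_{z \notin B_S(x)} |\beta^{(i,w)}_{\eta(x,w)}(z)|^2. \]
I would split the $w$-sum at $\cup_i \phi_i(B_{S_X}(x))$. On the complement, bound the inner sum by $1$, so the contribution is at most $\delta/2$ by the strongly-embeddable-family tail bound on $\alpha$. For $w$ in the union, $d(x, \eta(x, w)) \leq S_X$, so $z \notin B_{S_X + S_Y}(x)$ forces $d(z, \eta(x,w)) \geq S_Y$, and the inner sum is at most $\delta/2$ uniformly in $j$ by equi-strong embeddability. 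Hence the tail of $\xi_x$ outside $B_{S_X + S_Y}(x)$ is at most $\delta$, uniformly in $x$.

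The main obstacle is the nested choice of parameters: $S_X$ must be selected first using both the close-points and tail properties of the strongly embeddable family, then the $\beta^j$'s must be chosen with close-points tolerance appropriate for scale $2S_X + R$, and only then can $S_Y$ be extracted from equi-strong embeddability at a prescribed $\delta$. The argument is otherwise a direct strengthening of Theorem \ref{thm:ce}, with Proposition \ref{prop:CE}'s asymptotic inner-product decay replaced throughout by the pointwise $\ell^2$-tail control of Definition \ref{defn:SE}.
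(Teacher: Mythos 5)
Your construction is exactly the one the paper intends: its own proof of Corollary \ref{cor:stronglyembeddable} simply says to rerun the argument of Theorem \ref{thm:ce}, and your tensor-type map $\xi_x(z,w)=\alpha_x(w)\beta^{(i,w)}_{\eta(x,w)}(z)$, the Fubini splitting of the tail at $\cup_i\phi_i(B(x))$, and the triangle-inequality estimate $d(z,\eta(x,w))\geq S_Y$ are precisely the right replacements of the inner-product decay by pointwise $\ell^2$-tail control. (Landing in $\ell^2(X\times\Y)$ rather than $\ell^2(X)$ is harmless: either invoke the general-Hilbert-space formulation, or replace $\xi_x$ by the scalar function $z\mapsto\bigl(\sum_{w}|\xi_x(z,w)|^2\bigr)^{1/2}$, which preserves unit norms and tails and does not increase $\|\xi_x-\xi_y\|$.)

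The one point that needs repair is your quantifier ordering for $S_X$. As written you ask for a single $S_X$ at which the $\alpha$-tail is below $\delta/2$ ``for any preassigned $\delta$,'' which is impossible for a fixed $S_X$ unless the tail vanishes; and if you instead let $S_X$ depend on $\delta$, then the $\beta^j$'s (whose close-points scale is $2S_X+R$), and hence $\xi$ itself, would depend on $\delta$ --- but Definition \ref{defn:SE} requires one fixed map for the given $R,\epsilon$, with the supremum of tails tending to $0$ as $S\to\infty$. The fix is the same device the paper uses in the proof of Theorem \ref{thm:ce}: fix $S_X$ once at an $\epsilon$-level tolerance, build $\xi$ once, and then, when verifying condition (2), for each $\delta$ choose a possibly larger radius $S'_X(\delta)$ with $\sum_{w\notin\cup_i\phi_i(B_{S'_X}(x))}|\alpha_x(w)|^2<\delta/2$ (available because condition (2) of Definition \ref{defn:weakexactfamily} holds for the already-fixed $\alpha$), and choose $S_Y(\delta)$ from equi-strong embeddability of the already-fixed family $\beta^j$. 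Your splitting argument then runs verbatim with $S'_X$ in place of $S_X$: for $w\in\cup_i\phi_i(B_{S'_X}(x))$ one has $d(x,\eta(x,w))\leq S'_X$, so $z\notin B_{S'_X+S_Y}(x)$ forces $d(z,\eta(x,w))\geq S_Y$, while the complementary $w$'s contribute at most $\delta/2$; the close-points scale $2S_X+R$ for the $\beta^j$'s never needs updating. With that adjustment the proof is complete and coincides with the paper's.
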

\begin{proof}
This follows directly by the argument used in Theorem \ref{thm:ce} above.
\end{proof}

Since a strongly embeddable family of maps is a generalization of relative property A, we have the following.
\begin{cor}
If $G$ has relative property A with respect to a finite family of subgroups $H_1, \ldots, H_n$, and if each $H_i$ is
strongly embeddable, then $G$ is strongly embeddable.
\end{cor}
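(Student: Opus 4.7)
The plan is to reduce the statement directly to Corollary \ref{cor:stronglyembeddable}. By the lemma identifying relative property A with exactness of the family of quotient maps, the hypothesis that $G$ has relative property A with respect to $\{H_1,\ldots,H_n\}$ means that the family $\{\pi_i : G \to G/H_i\}_{i=1}^n$ is an exact family of maps. I would first observe that any exact family is automatically a strongly embeddable family in the sense of Definition \ref{defn:weakexactfamily}: if $\xi$ has $\supp \xi_x \subset \cup_i \phi_i(B_S(x))$ for some fixed $S$, then for all larger radii the tail sum $\sum_{w \notin \cup_i \phi_i(B_{S'}(x))} |\xi_x(w)|^2$ is identically zero, so condition (2) of Definition \ref{defn:weakexactfamily} holds trivially. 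Thus $\{\pi_i\}$ is a strongly embeddable family of maps.

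Next I would verify that the preimages $\{\pi_i^{-1}(aH_i) : aH_i \in G/H_i,\, i=1,\ldots,n\}$ form an equi-strongly embeddable family of metric spaces. Each preimage $\pi_i^{-1}(aH_i) = aH_i$ is, via left multiplication by $a$, isometric to $H_i$ with its induced metric, because the metric on $G$ is left-invariant. Given $R,\epsilon > 0$, since each $H_i$ is strongly embeddable, pick for each $i\in\{1,\ldots,n\}$ a map $\beta^{(i)}: H_i \to \ell^2(H_i)$ witnessing strong embeddability for the pair $(R,\epsilon)$, with associated tail function $S \mapsto \sup_{h \in H_i} \sum_{w \notin B_S(h)} |\beta^{(i)}_h(w)|^2$ tending to $0$. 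Transport $\beta^{(i)}$ to each coset $aH_i$ by setting $\xi^{aH_i}_{ah}(ah') := \beta^{(i)}_h(h')$; this is a well-defined unit vector in $\ell^2(aH_i)$ and, by left-invariance, satisfies the same Lipschitz-type control (1) and the same tail decay (2) as $\beta^{(i)}$. Taking the maximum of the $n$ tail functions gives a single tail bound valid for all cosets of all $H_i$, so the family $\{aH_i\}_{i,a}$ is equi-strongly embeddable.

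With both hypotheses of Corollary \ref{cor:stronglyembeddable} verified for $X = G$ and $\phi_i = \pi_i$, I would simply apply that corollary to conclude that $G$ is strongly embeddable.

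The only step requiring any care is the equi-strong embeddability of the preimages: one has to notice that left-invariance of the metric makes every coset of $H_i$ isometric to $H_i$, and that the finiteness of the family $\{H_1,\ldots,H_n\}$ is what converts the individual strong embeddability of each $H_i$ into a uniform control over the infinite collection of cosets. Once this is in place, the rest of the argument is immediate from the machinery already developed.
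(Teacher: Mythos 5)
Your proposal is correct and is essentially the paper's own argument: the paper deduces this corollary from Corollary \ref{cor:stronglyembeddable} by noting that relative property A makes $\{\pi_i\}$ an exact (hence strongly embeddable) family of maps and that strong embeddability of each $H_i$, together with left-invariance of the metric and finiteness of the family, makes the cosets an equi-strongly embeddable family. You have simply spelled out the transport-to-cosets and tail-sum details that the paper leaves implicit.
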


In \cite{JOR4} it was shown that if a group $G$ has a normal subgroup $H$ with $G/H$ of property A, then $G$ has relative
property A with respect to $H$.  As such, the following is a generalization of the previously mentioned Dadarlat and Guentner
result on the coarse embeddability of extensions.
\begin{cor}
Suppose $G$ has relative property A with respect to a finite family of subgroups $\{H_i\}_{i = 1}^{n}$.  If each $H_i$ is 
coarsely embeddable, then $G$ is coarsely embeddable.
\end{cor}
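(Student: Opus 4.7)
The plan is to reduce the corollary to Theorem \ref{thm:ce}, applied to the family of quotient maps $\{\pi_i : G \to G/H_i\}_{i=1}^n$. I need to check two things: first, that this family is a strongly embeddable family of maps; second, that the preimages $\pi_i^{-1}(w)$ form an equi-coarsely embeddable family.

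For the first point, the lemma in Section 3 identifies relative property A of $G$ with respect to $\{H_i\}$ with the statement that $\{\pi_i\}$ is an \emph{exact} family of maps in the sense of Definition \ref{defn:exactfamily}. But an exact family of maps is automatically a strongly embeddable family of maps in the sense of Definition \ref{defn:weakexactfamily}: the hard support condition $\supp \xi_x \subset \cup_i \phi_i(B_{S}(x))$ forces the $\ell^2$-tail $\sum_{w \notin \cup_i \phi_i(B_S(x))} |\xi_x(w)|^2$ to vanish outright once $S$ is that large, which is strictly stronger than the limit condition $(2)$ demanded of a strongly embeddable family. Thus the hypothesis provides the first ingredient.

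For the second point, because the metric on $G$ is left-invariant, each preimage $\pi_i^{-1}(gH_i) = gH_i$ is isometric to $H_i$ under the left multiplication $h \mapsto gh$. So the collection $\{\pi_i^{-1}(w) : w \in G/H_i,\, i = 1, \ldots, n\}$ consists of isometric copies of the finitely many spaces $H_1, \ldots, H_n$. Given $R, \epsilon > 0$, coarse embeddability of each $H_i$ yields, via Proposition \ref{prop:CE}, a Hilbert-valued map satisfying (1) and (2) of that proposition; transport these along the isometries $h \mapsto gh$ to get matching maps on every coset, and take the maximum of the $S$-thresholds across the finite set of indices $i$ to obtain the uniform tail control required by Definition \ref{defn:ece}. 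With both hypotheses of Theorem \ref{thm:ce} verified, that theorem (applied with $X = G$, $\phi_i = \pi_i$, $Y_i = G/H_i$) delivers coarse embeddability of $G$.

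I do not anticipate a substantive obstacle here, since the deep content is packaged in Theorem \ref{thm:ce}. The only step that genuinely deserves a sentence of justification is the passage from \emph{exact} family to \emph{strongly embeddable} family, and this is merely a comparison of the two defining conditions; the isometry between cosets and their defining subgroup, together with finiteness of the family $\{H_i\}$, then makes the equi-coarse embeddability of the preimages immediate.
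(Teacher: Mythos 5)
Your proof is correct and follows essentially the same route as the paper: identify relative property A with $\{\pi_i\}$ being an exact (hence strongly embeddable) family of maps, note the cosets are isometric copies of the finitely many coarsely embeddable subgroups and thus equi-coarsely embeddable, and apply Theorem \ref{thm:ce}. You merely spell out two steps the paper leaves implicit (exact $\Rightarrow$ strongly embeddable family, and the transport of embeddings along coset isometries with finiteness giving uniformity), both of which are fine.
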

\begin{proof}
If each $H_i$ is coarsely embeddable, then $\left\{ \pi_i^{-1}( aH_i ) : aH_i \in G/H_i\, , \,\, i=1,\ldots,n \right\}$ is
an equi-coarsely embeddable family.  As relative property A is equivalent to $\left\{ \pi_i : G \to G/H_i \right\}_{i=1}^{n}$
being an exact family of maps, this follows from Theorem \ref{thm:ce}.
\end{proof}

We next turn to group actions on metric spaces.  Suppose the group $G$ acts co-finitely on a uniformly 
discrete bounded geometry metric space $X$.  Let$x_1, x_2, \ldots, x_n \in X$ be representatives of the $G$ orbits.  For each 
$i$, let $H_i$ be the stabilizer of $x_i$, and denote by $\pi_i : G \to G/H_i$ the quotient map.
For each $x_i$, there is a natural identification of the orbit $G x_i$ with $G/H_i$, thus between
$X$ and $\sqcup_{i=1}^{n} G/H_i$.  Thus if $X$ is a finite space, then each $H_i$ has finite index in $G$.
The following then follows from Proposition 4.11 of \cite{JOR4}.

\begin{thm}\label{finiteIndex}
If $X$ is finite, then $\left\{ \pi_i : G \to G/H_i \right\}$ is an exact family of maps.
\end{thm}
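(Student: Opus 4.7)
The plan is to explicitly construct a map $\xi : G \to \ell^2(\Y)$ witnessing that $\{\pi_i\}_{i=1}^n$ is an exact family, where $\Y = \sqcup_{i=1}^n G/H_i$. The crucial observation to make first is that when $X$ is finite, each orbit $Gx_i$ is finite and identified with $G/H_i$, so each $H_i$ has finite index in $G$. Consequently $\Y$ is itself a finite set, and this collapses Definition \ref{defn:exactfamily} to something almost vacuous.

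The construction I propose is to take $\xi_g$ to be the same unit vector for every $g \in G$, namely the normalized characteristic function of the first summand $G/H_1 \subset \Y$:
\[ \xi_g \;=\; \frac{1}{\sqrt{[G:H_1]}} \sum_{aH_1 \in G/H_1} \delta_{aH_1}. \]
Condition (1) of Definition \ref{defn:exactfamily} is then trivially satisfied, since $\xi_g - \xi_h = 0$ for all $g,h \in G$, regardless of the chosen $R$ and $\epsilon$.

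For condition (2) I would need an $S$ such that $\supp \xi_g = G/H_1 \subset \cup_i \pi_i(B_S(g))$. Since the summands $G/H_i$ are pairwise disjoint inside $\Y$, this reduces to requiring $G/H_1 \subset \pi_1(B_S(g))$ uniformly in $g$, i.e., that every coset of $H_1$ meets $B_S(g)$. By left-invariance of the metric on $G$, this is equivalent to the condition $d(e, cH_1) < S$ for each coset $cH_1 \in G/H_1$; since $G/H_1$ is finite, the maximum of these finitely many finite distances supplies the required $S$, independent of $g$.

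The main (and essentially only) obstacle is the initial observation that finiteness of the indices $[G:H_i]$ reduces exactness to a triviality; once noted, there is little content left in the argument. This is presumably exactly the content of the cited Proposition 4.11 of \cite{JOR4}, which one could alternatively invoke as a black box.
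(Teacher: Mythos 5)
Your construction is correct. Taking $\xi_g$ to be the constant unit vector $[G:H_1]^{-1/2}\sum_{aH_1}\delta_{aH_1}$ makes condition (1) of Definition \ref{defn:exactfamily} vacuous, and for condition (2) your use of left-invariance is exactly what is needed: $d(g,aH_1)=d(e,g^{-1}aH_1)$ and left translation permutes the cosets, so $\sup_{a}d(g,aH_1)=\max_{c}d(e,cH_1)$ is finite (finitely many cosets, since $G/H_1\cong Gx_1\subset X$ is finite) and independent of $g$; any $S$ exceeding this maximum works. The only caveat is that finiteness of $[G:H_1]$ is essential both for the normalization and for the maximum, and you do note it. Your route differs from the paper's, which gives no direct construction at all: there the result is obtained by quoting Proposition 4.11 of \cite{JOR4} (relative property A with respect to finite-index subgroups) together with the lemma identifying relative property A with exactness of the family $\{\pi_i\}$. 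Your argument is self-contained and shows concretely why finite index trivializes the definition, at the cost of redoing (in Hilbert-space language) what the cited proposition already encodes; the paper's version is shorter and keeps the statement tied to the relative property A framework that motivates the rest of the section. Either is acceptable.
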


For groups acting on infinite metric spaces, the following lemma will be useful.

\begin{lem}\label{lem:TechHyp}
There is a sequence $(T_k)$ of positive numbers, tending to infinity, such that for all $w \in X$, if $d_X( x_1, w ) < T_k$ then there
is $g_w \in G$ and $i$ with $w = g_w x_i$, such that $d_G(1_G, g_w) < k$.  That is, 
$B_{T_k}( gx_1) \subset \cup_{i=1}^{n} B_k(g) x_i$ for all $g \in G$.
\end{lem}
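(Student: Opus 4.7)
The plan is to exploit bounded geometry of $X$ and co-finiteness of the action to show that each ball around $x_1$ can be covered by translates of the orbit representatives by elements of a (uniform) ball in $G$, and then to define $T_k$ as the best radius for which this covering succeeds. By $G$-invariance the statement around $gx_1$ reduces to the statement at $x_1$.

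First I would reduce to the case $g = 1_G$. Since $G$ acts by isometries on $X$, the map $w \mapsto gw$ is an isometry sending $B_{T_k}(x_1)$ onto $B_{T_k}(gx_1)$; since $d_G$ is left-invariant, $d_G(1_G, g_w) < k$ is equivalent to $d_G(g, gg_w) < k$. So if $w = g_w x_i$ witnesses the statement at $x_1$, then $gw = (gg_w)x_i$ witnesses it at $gx_1$ with the same $k$. It therefore suffices to produce a single sequence $(T_k)$, tending to infinity, such that
\[
B_{T_k}(x_1) \;\subset\; A_k \;:=\; \bigcup_{i=1}^n \{\, g x_i : g \in B_k(1_G) \,\}.
\]

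Second, I would define
\[
T_k := \sup\{\, T \ge 0 : B_T(x_1) \subset A_k \,\}.
\]
Since $x_1 = 1_G \cdot x_1 \in A_k$ and $X$ is uniformly discrete, $T_k > 0$ for every $k \ge 0$. Moreover $A_k \subset A_{k+1}$, so $(T_k)$ is non-decreasing.

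Third, the heart of the argument is to show $T_k \to \infty$. Fix $R > 0$. Because $X$ has bounded geometry (and is uniformly discrete), $B_R(x_1)$ is a finite set. The action is co-finite, so $X = \bigsqcup_{i=1}^n Gx_i$; hence every $w \in B_R(x_1)$ admits at least one presentation $w = g_w x_{i(w)}$ with $g_w \in G$. Put
\[
k(R) := \max\bigl\{\, \lfloor d_G(1_G, g_w) \rfloor + 1 : w \in B_R(x_1) \,\bigr\},
\]
which is finite as the maximum is over a finite set. By construction $B_R(x_1) \subset A_{k(R)}$, so $T_{k(R)} \ge R$. Since $(T_k)$ is non-decreasing and $R$ was arbitrary, $T_k \to \infty$.

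The main (mild) obstacle is to make sure that the choice of $g_w$ in the presentation $w = g_w x_{i(w)}$ is unproblematic: the stabilizer $H_{i(w)}$ may be infinite, but this does not matter because we only need the existence of \emph{one} presentation with small $d_G(1_G, g_w)$, and we are free to choose a minimizing representative in the coset $g_w H_{i(w)}$. Everything else is bookkeeping, and once $T_k \to \infty$ is established, the translated inclusion $B_{T_k}(gx_1) \subset \bigcup_i B_k(g) x_i$ follows immediately from the isometry $w \mapsto gw$ as in the first paragraph.
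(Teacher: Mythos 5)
Your proof is correct and takes essentially the same route as the paper: use finiteness of balls in $X$ (bounded geometry) to see that each $B_R(x_1)$ is covered by $\cup_i B_k(1_G)x_i$ for some finite $k$, then define $T_k$ by inverting this relation --- the paper does exactly this via $N_m = \max\{d_G(1_G,g_w): w\in B_m(x_1)\}$ and $T_n=\max\{m : N_m\le n\}$. Your sup-based definition is marginally cleaner (it sidesteps the paper's contradiction argument, which is only needed to make its max finite when $X$ is infinite), and you spell out the isometric translation from $x_1$ to $gx_1$ that the paper leaves implicit.
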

\begin{proof}
For each $w \in X$, fix a factorization $w = g_w x_{i_w}$, for $x_{i_w}$ one of the orbit representatives, and
$g_w \in G$.  There may be many possible choices for $g_w$.  Take one which minimizes $d_G( 1_G, g_w)$.

For a positive integer $m$, set $N_m = \max \left\{ d_G( 1_G, g_w ) \, : \, w \in B_m(x_1) \right\}$.
Clearly $N_m \leq N_{m+1}$ for all positive integers $m$.

If $N_m \leq S$ for all $m$, then
each $w \in X$ could be written as $g_w x_{i_w}$ with $d_G( 1_G, g_w) \leq S$.  As $G$ has bounded
geometry, there are at most finitely many possibilities for $g_w$, and finitely many orbits.  This contradicts
the assumption that $X$ has infinite cardinality.  Thus $N_m$ tends to infinity.

Using $(N_m)$ we now construct $(T_n)$.  Given $n$, let $T_n = \max \{ m \, : \, N_m \leq n \}$.
If  $d_X(w, x_i ) < T_n$, then $d_G( 1_G, g_w ) \leq N_{T_{n}} \leq n$.
\end{proof}

The following is a strongly embeddable analogue of Theorem 5.12 of \cite{JOR4}.
\begin{thm}\label{thm:actionse}
Suppose the group $G$ acts co-finitely on a strongly embeddable uniformly discrete bounded geometry metric space $X$.  Let
$x_1, x_2, \ldots, x_n \in X$ be representatives of the $G$ orbits.  For each $i$, let $H_i$ be the stabilizer of $x_i$, and 
denote by $\pi_i : G \to G/H_i$ the quotient map.  Then $\left\{ \pi_i \, : \, i = 1, \ldots, n \right\}$ is a strongly embeddable family of maps.
\end{thm}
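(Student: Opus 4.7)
The plan is to transport a witness to the strong embeddability of $X$ along the orbit map $g \mapsto g x_1$, using the canonical $G$-equivariant identification of $X$ with $\sqcup_{i=1}^n G/H_i$. I would fix $R, \epsilon > 0$ and first observe that, since $G$ acts by isometries and carries a proper left-invariant metric, the orbit map $G \to X$, $g \mapsto g x_1$, is coarsely Lipschitz: there is a constant $K$ with $d_X(g x_1, h x_1) \leq K \cdot d_G(g, h)$ for all $g, h \in G$. I would then apply Definition \ref{defn:SE} to $X$ at scale $K R$ with tolerance $\epsilon$, obtaining $\beta : X \to \ell^2(X)$ with $\|\beta_p\| = 1$, satisfying $\|\beta_p - \beta_q\| \leq \epsilon$ whenever $d_X(p, q) \leq K R$, and such that $\lim_{T \to \infty} \sup_{p \in X} \sum_{w \notin B_T(p)} |\beta_p(w)|^2 = 0$.

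Next, I would use the bijection $a x_i \leftrightarrow a H_i$ to identify $X$ with $\sqcup_i G/H_i$, and hence $\ell^2(X)$ with $\ell^2(\sqcup_i G/H_i)$. Under this identification, $\pi_i(B_S(g)) \subset G/H_i$ corresponds to $B_S(g) \cdot x_i \subset G x_i$. I would then define $\xi : G \to \ell^2(\sqcup_i G/H_i)$ by $\xi_g := \beta_{g x_1}$. Normalization $\|\xi_g\| = 1$ is automatic, and the contraction property $\|\xi_g - \xi_h\| \leq \epsilon$ when $d_G(g, h) \leq R$ follows from the corresponding property of $\beta$ together with the Lipschitz bound $d_X(g x_1, h x_1) \leq K R$.

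The main step is verifying the decay condition. For $g \in G$ and $S > 0$, the identification above gives
\[ \sum_{w \notin \cup_i \pi_i(B_S(g))} |\xi_g(w)|^2 = \sum_{w \in X \setminus \cup_i B_S(g) \cdot x_i} |\beta_{g x_1}(w)|^2. \]
By Lemma \ref{lem:TechHyp} there is a sequence $T_S \to \infty$ with $B_{T_S}(g x_1) \subset \cup_i B_S(g) \cdot x_i$ for every $g \in G$, so the right-hand sum is bounded by $\sup_{p \in X} \sum_{w \notin B_{T_S}(p)} |\beta_p(w)|^2$, which tends to zero as $S \to \infty$ by the strong embeddability of $X$. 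Taking $\sup_{g \in G}$ before letting $S \to \infty$ produces the required uniform decay.

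The main obstacle is the bookkeeping in translating between the two descriptions of $X$---as a $G$-set and as $\sqcup_i G/H_i$---and in correctly deploying Lemma \ref{lem:TechHyp} to pass from $G$-balls $B_S(g)$ to $X$-balls $B_{T_S}(g x_1)$. Once this identification is set up, the construction is essentially just the pullback of a strong-embedding witness for $X$ along the orbit map of $x_1$, with all quantitative estimates inherited from $\beta$.
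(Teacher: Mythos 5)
Your proposal is correct and follows essentially the same route as the paper: pull back a strong-embeddability witness for $X$ along the orbit map by setting $\xi_g = \beta_{g x_1}$, get condition (1) from the coarse-Lipschitz bound at scale $R$, and get the decay condition from Lemma \ref{lem:TechHyp}. The only (minor) difference is that the paper treats finite $X$ separately via Theorem \ref{finiteIndex}, because Lemma \ref{lem:TechHyp} is proved under the assumption that $X$ is infinite; you should either note that the finite case is trivial (or handled by that theorem) or observe that the lemma's conclusion can be arranged directly when $X$ is finite.
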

\begin{proof}

If $X$ is finite, this follows by Theorem \ref{finiteIndex}.  Otherwise, we assume $X$ is infinite.
Fix $R, \epsilon > 0$.   There is a $C > 0$ such that for all $g, g' \in G$, and all $i, j \in \{ 1, \ldots, n\}$,
$d_X( g x_i, g' x_j ) \leq C( d_G( g, g' ) + 1 )$.

Since $X$ is strongly embeddable, there is a Hilbert space valued map $\beta : X \to \ell^2(X)$ with $\| \beta_x \| = 1$ 
for each $x \in X$, and satisfying:
\begin{enumerate}
	\item[(1)] If $d_X(x,y) \leq C(R+1)$, then $\| \beta_x - \beta_{y} \| \leq \epsilon$.
	\item[(2)] $\lim_{S \to \infty} \sup_{x \in X} \sum_{w \notin B_S(x)} |\beta_x(w)|^2 = 0$.
\end{enumerate}

Define $\xi : G \to \ell^2(X)$ by  $\xi_g = \beta_{g x_1}$.
It is clear that for all $g \in G$, $\| \xi_g \| = 1$.
If $d_G(g,g') \leq R$, then $d_X(g x_1, g' x_1 ) \leq C(R+1)$ so $\| \xi_g - \xi_{g'} \| \leq \epsilon$.

For any $g \in G$ and $S > 0$
\begin{align*}
	\sum_{w \notin \cup_i  B_{S}(g) x_i } \left| \xi_g(w) \right|^2 &=
			\sum_{w \notin \cup_i  B_{S}(g) x_i } \left| \beta_{gx_1}(w) \right|^2 \\
		&\leq \sum_{w \notin B_{T_S}(gx_1)  } \left| \beta_{gx_1}(w) \right|^2,
\end{align*}
where the last inequality follows from Lemma \ref{lem:TechHyp}.
This tends to zero uniformly in $g$, as $S \to \infty$.

\end{proof}

\begin{thm}
Let $H$ be a normal subgroup of $G$.  If $G/H$ is strongly embeddable
in the quotient metric, then $\left\{ \pi : G \to G/H \right\}$ is a strongly embeddable family of maps.
\end{thm}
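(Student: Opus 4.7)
The plan is to pull back a strong embedding of $G/H$ along the quotient map $\pi$. Given $R, \epsilon > 0$, I apply the strong embeddability of $(G/H, d_{G/H})$ to obtain a map $\beta : G/H \to \ell^2(G/H)$ with $\|\beta_{aH}\| = 1$ for every $aH$, satisfying $\|\beta_{aH} - \beta_{bH}\| \leq \epsilon$ whenever $d_{G/H}(aH, bH) \leq R$, together with the tail decay condition (2) of Definition \ref{defn:SE}. Then I define $\xi : G \to \ell^2(G/H)$ by $\xi_g := \beta_{gH}$; this map is pointwise unit-norm and is the candidate witness for $\{\pi\}$ being a strongly embeddable family.

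Condition (1) of Definition \ref{defn:weakexactfamily} is immediate from the fact that $\pi$ is $1$-Lipschitz, since $d_{G/H}(gH, g'H) = \inf_{h \in H} d_G(g, g'h) \leq d_G(g, g')$. Hence $d_G(g, g') \leq R$ forces $\|\xi_g - \xi_{g'}\| = \|\beta_{gH} - \beta_{g'H}\| \leq \epsilon$.

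The only substantive point is condition (2), for which the key observation is the equality $\pi(B_S(g)) = B_S(gH)$ as subsets of $G/H$ for every $g \in G$ and every $S > 0$, where on the left the ball is taken in $G$ and on the right in $G/H$. The forward inclusion follows from $1$-Lipschitzness; for the reverse, if $d_{G/H}(gH, aH) \leq S$ then, because $G$ is uniformly discrete with a proper left-invariant metric, the infimum defining $d_{G/H}$ is attained, so some representative $ah \in aH$ lies in $B_S(g) \subset G$ and maps to $aH$ under $\pi$. Granting this identification,
\[ \sum_{wH \notin \pi(B_S(g))} |\xi_g(wH)|^2 \;=\; \sum_{wH \notin B_S(gH)} |\beta_{gH}(wH)|^2, \]
which tends to zero as $S \to \infty$ uniformly in $gH$, and hence uniformly in $g \in G$, by condition (2) of Definition \ref{defn:SE} applied to $\beta$. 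I do not anticipate any real obstacle beyond verifying the ball identification above; once that is in hand, the proof reduces to the singleton case of Definition \ref{defn:weakexactfamily}.
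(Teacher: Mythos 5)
Your proof is correct, and its core construction is the same pull-back the paper ultimately uses, but you reach it by a more direct route. The paper proves this theorem in one line by invoking Theorem \ref{thm:actionse} for the transitive isometric action of $G$ on $X = G/H$; unwinding that proof in this case gives exactly your map $\xi_g = \beta_{gx_1} = \beta_{gH}$. The difference is in how the two metric comparisons are handled: Theorem \ref{thm:actionse} only assumes a coarse compatibility $d_X(gx_i, g'x_j) \leq C(d_G(g,g')+1)$ between the group metric and the space, so its proof must thread the constant $C$ through condition (1) and invoke Lemma \ref{lem:TechHyp} to produce the radii $T_S$ with $B_{T_S}(gx_1) \subset \cup_i B_S(g)x_i$ for condition (2). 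You instead exploit the exact features of the quotient metric: $\pi$ is $1$-Lipschitz (so no constant $C$ is needed in condition (1) of Definition \ref{defn:weakexactfamily}), and $\pi(B_S(g)) = B_S(gH)$ on the nose, since properness of $d_G$ makes the infimum defining $d_{G/H}$ attained (and even without attainment, $B_S(gH) \subset \pi(B_{S+1}(g))$ would suffice for the limit). This makes Lemma \ref{lem:TechHyp} unnecessary and the verification of condition (2) of Definition \ref{defn:weakexactfamily} an identity, with uniformity in $g$ coming straight from condition (2) of Definition \ref{defn:SE} for $\beta$. What your argument buys is a short, self-contained proof of precisely this statement; what the paper's route buys is generality, since Theorem \ref{thm:actionse} applies to arbitrary co-finite actions where the orbit map $g \mapsto gx_1$ is only a coarse map rather than a metric quotient.
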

\begin{proof}
This follows from Theorem \ref{thm:actionse} by considering the $G$ action on $G/H$.
\end{proof}

If $H$ is strongly embeddable as well, the collection of cosets $\{ gH \}$ forms an equi-strongly embeddable family
of metric spaces.  Appealing to Corollary \ref{cor:stronglyembeddable} we obtain the following.
\begin{thm}
The collection of strongly embeddable groups is closed under extensions of groups with proper length functions.
\end{thm}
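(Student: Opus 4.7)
The plan is to combine the immediately preceding theorem with Corollary \ref{cor:stronglyembeddable}, so the proof is essentially a matter of assembling the pieces and verifying the equi-strong embeddability hypothesis for the family of cosets.

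First I would set up the extension: let $1 \to H \to G \to G/H \to 1$ be an extension with $H$ and $G/H$ both strongly embeddable, each equipped with a proper left-invariant length function, and give $G$ a proper left-invariant length function as well (so that the quotient metric on $G/H$ coincides, up to coarse equivalence, with the standard one). By the preceding theorem, the singleton family $\{\pi : G \to G/H\}$ is a strongly embeddable family of maps in the sense of Definition \ref{defn:weakexactfamily}.

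Next I would address the equi-strong embeddability of the preimages. The preimages of the map $\pi$ are exactly the cosets $\{gH\}_{gH \in G/H}$, viewed as metric subspaces of $G$. Because the metric on $G$ is left-invariant, for each $g \in G$ the map $h \mapsto gh$ is an isometry from $H$ onto $gH$. Now fix $R, \epsilon > 0$ and use that $H$ is strongly embeddable to obtain a map $\beta : H \to \ell^2(H)$ satisfying (1) and (2) of Definition \ref{defn:SE}. For each coset $gH$ define $\xi^{gH} : gH \to \ell^2(gH)$ by $\xi^{gH}_{gh}(gh') = \beta_h(h')$, i.e.\ transport $\beta$ along the isometry $H \to gH$. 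These maps are manifestly normalized, satisfy the $R$--$\epsilon$ condition uniformly in $gH$, and have their tails controlled by a single function of $S$ depending only on $\beta$. Hence $\{gH : gH \in G/H\}$ is equi-strongly embeddable.

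With both hypotheses of Corollary \ref{cor:stronglyembeddable} verified, that corollary applied to $X = G$ and the single map $\pi : G \to G/H$ yields that $G$ is strongly embeddable, completing the proof. The only real subtlety is the coset-to-$H$ isometry step, and that is immediate from left-invariance of the metric on $G$; everything else is a direct appeal to the previous theorem and Corollary \ref{cor:stronglyembeddable}.
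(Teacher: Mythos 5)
Your proof is correct and follows essentially the same route as the paper: invoke the preceding theorem to see that $\{\pi : G \to G/H\}$ is a strongly embeddable family of maps, observe that left-invariance of the metric makes the cosets $\{gH\}$ an equi-strongly embeddable family (a point the paper asserts without detail and you rightly spell out by transporting a single $\beta$ for $H$ along the isometries $h \mapsto gh$), and then apply Corollary \ref{cor:stronglyembeddable}.
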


\bibliography{all}

\end{document}